\numberwithin{equation}{section}
\def\p{\partial}
\def\cH{{\cal H}}
\def\cH{{\mathcal H}}
\newtheorem{prop}{Proposition}[section]
\newtheorem{theo}[prop]{Theorem}
\newtheorem{lemma}[prop]{Lemma}
\let\lra=\longrightarrow
\def\mapright\#1{\,\smash{\mathop{\lra}\limits^{\#1}}\,}
\begin{document}
\title{The Donaldson equation}
\author{Weiyong He \footnote{The author is partially supported by a PIMS postdoc fellowship.}}
\date{}
\maketitle

\section{Introduction}
S. Donaldson \cite{Donaldson2007} introduced a Weil-Peterson type
metric on the space of volume forms (normalized) on any Riemannian
manifold $(X, g)$ with fixed total volume. This infinite
dimensional space can be parameterized by all smooth functions
such that
\[
{\cal H} = \{\phi\in  C^\infty (X): 1 + \triangle_g \phi > 0\}.
\]
This is a locally Euclidean space.  The tangent space is exactly
$C^\infty(X)$ up to addition of some constants. The  metric is defined by
\[
\|\delta \phi\|_\phi^2 =  \int_X\; (\delta \phi)^2 (1 +
\triangle_g \phi) dg.
\]
The energy function on a path $\Phi: [0, 1]\rightarrow \cH$ is
defined as
\[
E(\Phi(t))=\int_0^1\int_X|\dot \Phi|^2(1+\triangle \Phi)dg.
\]
Then, the geodesic equation is
\begin{equation}\label{E-1-1}
\Phi_{tt}(1+\triangle \Phi) -|\nabla \Phi_t|^2_g = 0.
\end{equation}
This is a degenerated elliptic equation. To approach this
equation, Donaldson introduced a perturbed of the geodesic
equation
\begin{equation}\label{E-1-2}
\Phi_{tt}(1+\triangle \Phi) -|\nabla \Phi_t|^2_g =\epsilon,
\end{equation}
for any $\epsilon>0.$ The equation (\ref{E-1-2}) can be also
formulated  as the other two equivalent free boundary problems
according to \cite{Donaldson2007}. In joint work with X. Chen \cite{Chen-He},  we  get a smooth solution of the equation (\ref{E-1-2}) and a
weakly $C^2$ solution of the geodesic equation (\ref{E-1-1}),
where the a priori estimates on $|\Phi|_{C^1}$,  $\triangle \Phi$, $\Phi_{tt}$, $\nabla \Phi_t$  are independent of $\inf\epsilon$,
Using these solutions, we prove that $\cH$ is a
non-positively curved metric space, parallel to the result of the
space of K\"ahler metrics \cite{Chen00}.

From the PDE point of view, the equations (\ref{E-1-1}) and
(\ref{E-1-2}) are relevant to the operator
\begin{equation}\label{E-1-3}
Q(D^2\Phi)=\Phi_{tt}(1+\triangle \Phi) -|\nabla \Phi_t|^2.
\end{equation}
In this short note, we want to solve the following Dirichlet problem
\begin{equation}\label{E-1-4}
Q{(D^2\Phi)}=f,
\end{equation}
with boundary condition
\[\Phi(\cdot, 0)=\phi_0, \Phi(\cdot, 1)=\phi_1,\]
where $f\in C^{\infty}(X\times [0, 1])$ is a positive function and
$\phi_0, \phi_1\in \cH$. We call the equation (\ref{E-1-4})
Donladson equation and the operator $Q$ Donaldson operator. In the paper
\cite{Chen-He}, the fact that $f=\epsilon$ is a constant is used
crucially to get a priori estimates. We  notice that the equation can be still solved provided $f>0$, while $\inf f>0$ is actually used crucially to get a uniform $C^1$ bound below. We obtain
\begin{theo}\label{T-1-1}
Let $(X, g)$ be a compact Riemannian manifold and $f\in
C^{k}(X\times [0, 1])$ with $k\geq 2$ is a positive function. The
Dirichlet problem (\ref{E-1-4}) has a unique  solution $\Phi(x,
t)\in C^{k+1, \beta}(X\times [0, 1])$ for any  $\beta\in [0, 1)$.
Moreover, \[ 1+\triangle \Phi>0
\] for any $t\in [0, 1]$.
\end{theo}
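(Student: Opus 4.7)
My plan is to solve the Dirichlet problem by the method of continuity, using the result of \cite{Chen-He} for constant right-hand side as starting point. Specifically, set $c \eqdef \inf_{X\times[0,1]} f > 0$ and introduce the one-parameter family
\[
Q(D^2\Phi_s)=f_s\eqdef (1-s)c+s f,\qquad \Phi_s(\cdot,0)=\phi_0,\ \ \Phi_s(\cdot,1)=\phi_1,\qquad s\in[0,1],
\]
with $f_s\geq c>0$ throughout and $\Phi_s$ admissible in the sense $1+\triangle\Phi_s>0$. At $s=0$ the equation is the Donaldson equation with constant right-hand side, for which a smooth admissible solution is known from \cite{Chen-He}. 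Let $S\subset[0,1]$ be the set of parameters admitting an admissible solution $\Phi_s\in C^{k+1,\beta}$; it suffices to prove $S$ is both open and closed, since it contains $0$.

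For openness I would linearize $Q$ at an admissible $\Phi$, obtaining
\[
L_\Phi\psi=(1+\triangle\Phi)\psi_{tt}+\Phi_{tt}\triangle\psi-2\langle\nabla\Phi_t,\nabla\psi_t\rangle.
\]
Its principal symbol corresponds to the matrix with blocks $1+\triangle\Phi$, $\Phi_{tt} I_n$ on the diagonal and $-\nabla\Phi_t$ in the mixed block; computing by Schur complement, its determinant equals $\Phi_{tt}^{n-1}Q(D^2\Phi)=\Phi_{tt}^{n-1}f_s>0$. Together with $1+\triangle\Phi>0$ (and $\Phi_{tt}>0$, which is forced by $Q(D^2\Phi)>0$ and $1+\triangle\Phi>0$), this shows $L_\Phi$ is uniformly elliptic on the admissible set. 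The Dirichlet problem for $L_\Phi$ on the cylinder $X\times[0,1]$ is then solvable in H\"older spaces by standard linear theory, and the implicit function theorem in Banach spaces yields openness of $S$.

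Closedness is the main point, and this is where the hypothesis $\inf f>0$ enters in an essential way. What one needs are a priori bounds uniform in $s\in S$ up through $C^{k+1,\beta}$. A $C^0$ estimate follows from the maximum principle applied to $\Phi_s$ minus a convex combination of $\phi_0,\phi_1$. The $C^1$ estimate is the delicate new step: $|\nabla_x\Phi_s|$ can be bounded by differentiating the equation and using the maximum principle, but the bound on the time derivative $(\Phi_s)_t$ rests on constructing sub- and supersolution barriers from the boundary data, a construction that uses $\inf f_s\geq c>0$ in a quantitative way. For the second-order estimates one must bound $\triangle\Phi_s$, $(\Phi_s)_{tt}$ and $|\nabla(\Phi_s)_t|$ from above, together with a strict positive lower bound on $1+\triangle\Phi_s$; boundary $C^2$ bounds come from refined versions of the same barrier constructions, while interior bounds come from the maximum principle applied to test quantities such as $\log(1+\triangle\Phi_s)+\lambda|\nabla\Phi_s|^2$. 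Once $C^2$ bounds are in hand and $L_{\Phi_s}$ is uniformly elliptic with $C^0$ coefficients, Evans--Krylov gives a uniform $C^{2,\alpha}$ bound, and Schauder bootstrapping promotes this to $C^{k+1,\beta}$ using $f\in C^k$. The bulk of the work is a careful audit of the estimates of \cite{Chen-He} to check that each step depends only on $\inf f_s$ and $\|f_s\|_{C^k}$, rather than on $f_s$ being constant.

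For uniqueness, if $\Phi_1,\Phi_2$ are two admissible solutions with the same boundary values, the admissible set $\{1+\triangle\Phi>0\}$ is convex, so the straight line $\Phi_\sigma=\sigma\Phi_1+(1-\sigma)\Phi_2$ is admissible; writing
\[
0=Q(D^2\Phi_1)-Q(D^2\Phi_2)=\int_0^1 L_{\Phi_\sigma}(\Phi_1-\Phi_2)\,d\sigma
\]
expresses $\Phi_1-\Phi_2$ as a solution of a linear uniformly elliptic equation with zero boundary values, hence $\Phi_1=\Phi_2$ by the maximum principle. The main obstacle I anticipate is the combined gradient and second-order estimate: maintaining a uniform strict positive lower bound on $1+\triangle\Phi_s$ and an upper bound on $(\Phi_s)_t$ cannot be extracted from the equation alone and requires that the barrier arguments of \cite{Chen-He} be reworked with $\inf f$ as the sole positivity input.
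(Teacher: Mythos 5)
Your proposal follows essentially the same route as the paper: continuity method, a priori estimates up to $C^2$ depending only on $\inf f$ and $\|f\|_{C^2}$, Evans--Krylov plus bootstrapping, and uniqueness from concavity of $Q$ on the admissible set. Two points of difference are worth recording. First, the paper's continuity path is $Q(D^2\Phi)=(1-s)Q(D^2\Phi_{-a})+sf$ with $\Phi_{-a}=-at(1-t)+(1-t)\phi_0+t\phi_1$, so that $s=0$ has the \emph{explicit} solution $\Phi_{-a}$ and no appeal to the constant-$\epsilon$ existence theorem of \cite{Chen-He} is needed; your path $(1-s)c+sf$ works too but makes the starting point an imported theorem rather than a formula. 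Second, and more substantively, you have the anatomy of the $C^1$ estimate reversed: the bound on $\Phi_t$ is the easy half (the equation forces $\Phi_{tt}>0$, so $\Phi_t$ is monotone in $t$ and attains its extrema on the boundary, where the $C^0$ estimate controls it --- no barriers and no use of $\inf f$), whereas the spatial gradient bound is where $\inf f>0$ enters quantitatively. In the paper one applies the maximum principle to $h=\tfrac12(|\nabla\Phi|^2+b\Phi^2)$, and after using $h_t=h_k=0$ at an interior maximum the good term that saves the argument is $\sqrt{2bf}\,|\Phi_t|\,|\nabla\Phi|$, which requires both the normalization trick $\tilde\Phi=\Phi+At$ to force $|\Phi_t|\geq 1$ and the choice $b=\max(|\nabla f|/\sqrt f,\,4C_0)$; this is precisely the step that degenerates if $\inf f=0$. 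If you carried out your plan as written you would spend effort building barriers for $\Phi_t$ that are not needed and might miss that the gradient estimate is the place where the positivity of $f$ must be spent. With that correction your outline matches the paper's proof.
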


\noindent{\bf Acknowledgement:} The author would like to thank Prof. X. Chen for constant support and encouragements.

\section{A priori estimates}
In this section we derive the {\it a priori} estimates for the
Donaldson equation
\begin{equation}\label{E-2-1}
Q{(D^2\Phi)}=f,
\end{equation}
with boundary condition
\[\Phi(\cdot, 0)=\phi_0, \Phi(\cdot, 1)=\phi_1,\]
where $f$ is a positive smooth function on $X\times [0, 1]$. The
linearized operator is given by
\[
dQ(h)=\Phi_{tt}\triangle h+(1+\triangle \Phi)h_{tt}-2\langle
\nabla h_t, \nabla\Phi_t\rangle.
\]
 Recall the concavity for the
Donaldson equation.
\begin{lemma}\label{L-2-1}(Donaldson \cite{Donaldson2007}) 1. If $A>0$, then
$Q(A)>0$ and if $A\geq 0$, $Q(A)\geq 0$.\\

2. If $A, B$ are two matrices with $Q(A)=Q(B)>0,$ and if the
entries $A_{00}, B_{00}$ are positive then for any $s\in [0, 1]$,
\[
Q(sA+(1-s)B)\geq Q(A), Q(A-B)\leq 0.
\]
Moreover, strict inequality holds if the corresponding arguments
are not the same.
\end{lemma}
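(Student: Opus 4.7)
\emph{Part 1.} Write $A$ in block form with $A_{00}\in\RR$, $A_0\in\RR^n$, and spatial block $A'=(A_{ij})_{i,j\ge 1}$. If $A>0$, then $A_{00}>0$ and $A'>0$ (as a principal submatrix), so $\mathrm{tr}(A')>0$; the Schur complement gives $A_{00}>A_0^T(A')^{-1}A_0$. Combining the Cauchy--Schwarz inequality $|A_0|^4\le(A_0^TA'A_0)(A_0^T(A')^{-1}A_0)$ with the bound $A_0^TA'A_0\le\mathrm{tr}(A')|A_0|^2$ (valid since $A'\ge 0$) produces $|A_0|^2\le A_{00}\,\mathrm{tr}(A')$. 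Therefore $Q(A)=A_{00}(1+\mathrm{tr}(A'))-|A_0|^2\ge A_{00}>0$. The semidefinite case $A\ge 0$ follows by perturbing $A\mapsto A+\epsilon I$ and letting $\epsilon\downarrow 0$.

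\emph{Part 2: quasi-concavity.} On the open convex cone $\cU=\{A:A_{00}>0,\ 1+\mathrm{tr}(A')>0\}$ (which contains every $A$ with $A_{00}>0$ and $Q(A)>0$), for each $c\ge 0$ one has
\[
Q(A)>c \iff \sqrt{A_{00}(1+\mathrm{tr}(A'))}>\sqrt{c+|A_0|^2}.
\]
The left-hand side is concave in $A$ as the geometric mean of two nonnegative affine functionals, while the right-hand side is convex in $A$ as the Euclidean norm of an affine map. Hence their difference is concave on $\cU$, so the superlevel set $\{Q>c\}\cap\{A_{00}>0\}$ is convex. Taking $c=Q(A)=Q(B)>0$ with $A_{00},B_{00}>0$ yields $Q(sA+(1-s)B)\ge Q(A)$ for all $s\in[0,1]$.

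\emph{Part 2: second inequality.} Since $Q$ is a quadratic polynomial in the entries of its argument, the restriction $\phi(s):=Q(sA+(1-s)B)$ is a quadratic polynomial in $s$. It satisfies $\phi(0)=\phi(1)=Q(A)$ and, by the preceding step, $\phi(s)\ge Q(A)$ on $[0,1]$, so its $s^2$-coefficient is $\le 0$. Expansion identifies that coefficient with $(A-B)_{00}\,\mathrm{tr}((A-B)')-|(A-B)_0|^2$, which is the content of $Q(A-B)\le 0$ (the constant ``$1$'' in $1+\mathrm{tr}(X')$ is a zeroth-order term that contributes only to the constant and linear parts of $\phi$, and hence drops out of the second-order variation along the segment). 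Strict inequalities when the effective $Q$-data $(A_{00},\mathrm{tr}(A'),A_0)$ of $A$ and $B$ differ are read off from equality in AM--GM for the geometric mean and in Cauchy--Schwarz for $|A_0-B_0|^2$.

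\emph{Anticipated obstacle.} The one genuine conceptual step is spotting the decomposition $\{Q>c\}\cap\{A_{00}>0\}=\{f>g_c\}\cap\cU$ with $f(A)=\sqrt{A_{00}(1+\mathrm{tr}(A'))}$ concave and $g_c(A)=\sqrt{c+|A_0|^2}$ convex; after that, both claims in Part 2 follow from elementary convex-analysis facts and the quadratic structure of $Q$. Matching the homogeneous form that appears in the $s^2$-coefficient against the notation ``$Q(A-B)$'' in the statement is the only bookkeeping subtlety.
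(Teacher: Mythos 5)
The paper does not actually prove this lemma: it cites Donaldson and records only the ``equivalent form'' (Lemma \ref{L-2-2}), namely the concavity of $\log\bigl(xy-\sum z_i^2\bigr)$ on $\{x>0,\ y>0,\ xy-\sum z_i^2>0\}$, from which both inequalities of part 2 are meant to follow (quasi-concavity of $Q$ from concavity of $\log Q$, and then $Q(A-B)\le 0$ from the sign of the second-order term of $s\mapsto Q(B+s(A-B))$). Your route is genuinely different and, for the first inequality, arguably cleaner: writing $Q(A)>c$ as $\sqrt{A_{00}(1+\mathrm{tr}(A'))}>\sqrt{c+|A_0|^2}$, i.e.\ (concave) $>$ (convex), gives convexity of the superlevel sets without computing the Hessian of the logarithm. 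Part 1 and the perturbation argument for the semidefinite case are fine (though part 1 is even quicker as the sum over $i\ge 1$ of the $2\times 2$ principal minors $A_{00}A_{ii}-A_{0i}^2$).

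There is, however, one genuine defect, precisely at the spot you flagged as ``bookkeeping.'' With your inhomogeneous convention $Q(M)=M_{00}(1+\mathrm{tr}(M'))-|M_0|^2$, the $s^2$-coefficient of $\phi(s)=Q(B+s(A-B))$ is the \emph{homogeneous} form $(A-B)_{00}\,\mathrm{tr}((A-B)')-|(A-B)_0|^2$, which differs from $Q(A-B)$ by the term $(A-B)_{00}$; your argument therefore proves the homogeneous quantity is $\le 0$, not that $Q(A-B)\le 0$. And indeed the inhomogeneous version of the claim is false: with $n=1$, $A=\mathrm{diag}(3/2,\,1/3)$ and $B=\mathrm{diag}(1,1)$ one has $Q(A)=Q(B)=2$ and $A_{00},B_{00}>0$, yet $Q(A-B)=\tfrac12\bigl(1-\tfrac23\bigr)=\tfrac16>0$. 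The resolution is that in Lemma \ref{L-2-1} the operator $Q$ on matrices is the homogeneous quadratic form $M_{00}\,\mathrm{tr}(M')-|M_0|^2$; this is exactly how the paper uses the lemma in the proof of Lemma \ref{L-2-3}, where the constant $1$ is promoted to an extra diagonal entry of an $(n+2)\times(n+2)$ matrix before $Q(B-A)\ge 0$ is invoked. Once $Q$ is read homogeneously, your identification of the $s^2$-coefficient with $Q(A-B)$ is exact, and the rest of your proof (part 1 via minors, quasi-concavity via $\sqrt{A_{00}\,\mathrm{tr}(A')}-|A_0|$) goes through verbatim on the cone $\{A_{00}>0,\ \mathrm{tr}(A')>0\}$. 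So the fix is a change of convention, not of argument, but as written the step ``which is the content of $Q(A-B)\le 0$'' is asserting something your computation does not deliver.
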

We have its equivalent form.
\begin{lemma}\label{L-2-2} Consider the function
\[
f(x, y, z_1, \cdots, z_n)=\log{\left(xy-\sum z_i^2\right)}.
\]
Then $f$ is concave when $x>0, y>0, xy-\sum z_i^2>0$.
\end{lemma}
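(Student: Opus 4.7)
The plan is to derive Lemma 2.2 as an immediate consequence of Lemma 2.1, exploiting the degree-$2$ homogeneity of $Q(x, y, z) := xy - \sum z_i^2$. Under the identification $x = A_{00}$, $z_i = A_{0i}$, $y = 1 + \sum_{i \geq 1} A_{ii}$, the operator $Q$ in Lemma 2.1 depends only on the reduced variables $(x, y, z) \in \RR^{n+2}$, and the cone $\{x > 0, y > 0, xy - |z|^2 > 0\}$ matches its domain (note $x > 0$ together with $xy > |z|^2 \geq 0$ forces $y > 0$ automatically). Part (2) of Lemma 2.1 then says $Q$ is quasi-concave in $(x, y, z)$ on this cone: for any two points with equal positive $Q$-value, the value on the joining segment is at least as large.

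The first step is to upgrade quasi-concavity of $Q$ to concavity of $\sqrt{Q}$ via homogeneity. Given $p_1, p_2$ in the cone, set $\alpha_i = \sqrt{Q(p_i)}$ and $p_i' = p_i/\alpha_i$, so $Q(p_i') = 1$. For $\lambda \in [0, 1]$ let $\gamma = \lambda \alpha_1 + (1-\lambda) \alpha_2$ and $t = \lambda \alpha_1/\gamma$, so that
\[
\lambda p_1 + (1-\lambda) p_2 = \gamma\bl t\, p_1' + (1-t)\, p_2'\br.
\]
Applying quasi-concavity to $p_1', p_2'$ (which share $Q = 1$) and then rescaling by $\gamma^2$ gives
\[
Q(\lambda p_1 + (1-\lambda) p_2) \geq \gamma^2 = \bl\lambda\sqrt{Q(p_1)} + (1-\lambda)\sqrt{Q(p_2)}\br^2,
\]
so $\sqrt{Q}$ is concave on the cone.

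The second step is to pass to $\log Q$. By weighted AM--GM, $\gamma \geq \alpha_1^{\lambda} \alpha_2^{1-\lambda}$, hence
\[
Q(\lambda p_1 + (1-\lambda) p_2) \geq \alpha_1^{2\lambda} \alpha_2^{2(1-\lambda)} = Q(p_1)^{\lambda} Q(p_2)^{1-\lambda},
\]
and taking logarithms yields the desired concavity of $f = \log Q$.

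There is no substantive obstacle: the argument merely repackages Lemma 2.1 using homogeneity and AM--GM. If a self-contained proof is preferred, one may compute the Hessian of $f$ directly. Writing $g = xy - |z|^2$, a short calculation gives
\[
g^2 (a, b, w)^T D^2 f\, (a, b, w) = 2g(ab - |w|^2) - (ay + bx - 2 z \cdot w)^2,
\]
which is non-positive either trivially when $ab \leq |w|^2$, or, when $ab > |w|^2$, by a Cauchy--Schwarz reduction on $z \cdot w$ to the $n = 1$ case, where it encodes the log-concavity of $\det$ on $2\times 2$ symmetric positive-definite matrices.
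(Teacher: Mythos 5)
Your argument is correct. Note that the paper offers no proof of this lemma at all: it simply states Lemma \ref{L-2-2} as the ``equivalent form'' of Lemma \ref{L-2-1}, so your first route is precisely the implication the paper leaves implicit, carried out carefully. The identification $(x,y,z)=(A_{00},\,1+\sum_{i\geq 1}A_{ii},\,A_{0i})$ does turn part (2) of Lemma \ref{L-2-1} into quasi-concavity of $Q=xy-\sum z_i^2$ along its level sets (the map $A\mapsto(x,y,z)$ is affine, so it respects convex combinations, and any rescaled point is realized by some symmetric matrix with positive $(0,0)$ entry), and your homogeneity/AM--GM upgrade to concavity of $\sqrt{Q}$ and then of $\log Q$ is standard and executed correctly. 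The supplementary Hessian computation is also sound: $2g(ab-|w|^2)-(ay+bx-2z\cdot w)^2\leq 0$ is the reverse Cauchy--Schwarz inequality for the Lorentzian form $xy-|z|^2$ on its forward cone, and your reduction to the $2\times 2$ $\log\det$ case goes through after normalizing $a,b>0$, since then $ay+bx\geq 2\sqrt{abxy}>2|z|\,|w|$ keeps the relevant quantity positive under the replacement $z\cdot w\mapsto |z|\,|w|$. Either route is a complete proof; the first exhibits the lemma as literally equivalent to Donaldson's matrix statement, while the second is self-contained and independent of Lemma \ref{L-2-1}.
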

We will use the following notations. At any point $p\in X\times
[0, 1]$, take local coordinates $( x_1, \cdots, x_n, t)$. We can
always diagonalize the metric tensor $g$ as
$g_{ij}(p)=\delta_{ij}, \p_kg_{ij}(p)=0$. We will use, for any
smooth function $f$ on $X\times [0, 1]$, the following notations
\[
\triangle f_i=\triangle (f_i), ~~\triangle f_{ij}=\triangle
(f_{ij}), ~~\triangle f_{,i}=(\triangle f)_{, i}~~\mbox{and}~~
\triangle f,_{ij}=(\triangle f)_{ij}.
\]
For any function $f,$ $f_i, f_{ij}$ etc are covariant derivatives.
By Weitzenbock formula, we have
\begin{equation}\label{E-2-2}
\triangle f_i=\triangle f,_{i}+R_{ij}f_j,
\end{equation}
where $R_{ij}$ is the Ricci tensor of the metric $g$.
\subsection{$C^{0}$ estimates}
Denote \[\Phi_a=at(1-t)+(1-t)\phi_0+t\phi_1\] for any number $a$.
The $C^{0}$ estimate is similar as in \cite{Chen-He}. For the sake
of the completeness, we include the proof here.
\begin{lemma}\label{L-2-3}
If $\Phi$ satisfies (\ref{E-2-1}), then for some $a$ big enough,
\[
\Phi_{-a}\leq \Phi\leq (1-t)\phi_0+t\phi_1.
\]
\end{lemma}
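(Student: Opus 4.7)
The plan is a standard barrier argument: I construct an upper barrier $\Psi := (1-t)\phi_0 + t\phi_1$ and a lower barrier $\chi := \Phi_{-a}$ with $a$ sufficiently large, and sandwich $\Phi$ between them via maximum principles applied at each end.

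For the upper bound $\Phi \leq \Psi$, note $\Psi_{tt} \equiv 0$, so $Q(D^2\Psi) = -|\nabla(\phi_1 - \phi_0)|_g^2 \leq 0 < f$. Let $V := \Phi - \Psi$; it vanishes at $t = 0, 1$. If $V$ attained a positive interior maximum at some $p$, the negative semidefiniteness of $D^2 V(p)$ would give $\Phi_{tt}(p) \leq 0$ and $\triangle\Phi(p) \leq \triangle\Psi(p)$. Since the standing class requires $\Phi(\cdot, t) \in \cH$, one has $1 + \triangle\Phi > 0$, whence $\Phi_{tt}(1+\triangle\Phi)(p) \leq 0$ and thus $Q(D^2\Phi)(p) \leq -|\nabla\Phi_t|^2(p) \leq 0$, contradicting $Q(D^2\Phi) = f > 0$. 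Therefore $V$ attains its maximum on the boundary, yielding $\Phi \leq \Psi$.

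For the lower bound $\chi \leq \Phi$, a direct max-principle argument runs aground because the term $|\nabla\Phi_t|^2$ at an interior extremum involves mixed derivatives $\Phi_{ti}$ that are not controlled by sign of the Hessian alone. Instead I use the concavity of $\log Q$ from Lemma~\ref{L-2-2}. Since $\phi_i \in \cH$, $1 + \triangle\chi = (1-t)(1 + \triangle\phi_0) + t(1+\triangle\phi_1) \geq m > 0$ for some constant $m$; then $Q(D^2\chi) = 2a(1+\triangle\chi) - |\nabla(\phi_1 - \phi_0)|_g^2 \geq 2am - C$. Choose $a$ so large that $Q(D^2\chi) > \sup_{X\times[0,1]} f$ everywhere. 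Both $D^2\Phi$ and $D^2\chi$ then lie in the concavity domain of $\log Q$ — the required positivity $\Phi_{tt} > 0$ follows from $Q(D^2\Phi) = f > 0$ together with $1+\triangle\Phi > 0$, while $\chi_{tt} = 2a > 0$ is immediate. The tangent-plane inequality of the concave function $\log Q$ at the Hessian $D^2\Phi$ reads
\[
\log Q(D^2\chi) \leq \log f - \tfrac{1}{f}\, dQ_\Phi(W), \qquad W := \Phi - \chi,
\]
which rearranges to $dQ_\Phi(W) \leq f \log\!\bigl(f/Q(D^2\chi)\bigr) < 0$ throughout $X \times [0,1]$.

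The linearized operator $dQ_\Phi(h) = \Phi_{tt}\triangle h + (1+\triangle\Phi) h_{tt} - 2\langle \nabla h_t, \nabla\Phi_t\rangle$ is purely second-order and elliptic: its symbol $\Phi_{tt}|\xi|^2 + (1+\triangle\Phi)\tau^2 - 2\tau\langle\xi, \nabla\Phi_t\rangle$ is positive-definite precisely because $\Phi_{tt}(1+\triangle\Phi) > |\nabla\Phi_t|^2$, i.e.\ $f > 0$. With no zeroth-order term and $dQ_\Phi(W) < 0$, the weak maximum principle forces the minimum of $W$ onto the boundary $\{t = 0, 1\}$, where $W = 0$; hence $W \geq 0$, i.e.\ $\Phi \geq \chi = \Phi_{-a}$. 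The main obstacle is the lower bound: the one-line Hessian-sign argument that settles the upper bound fails here because $|\nabla\Phi_t|^2$ cannot be tamed at an extremum, and one has to convert the nonlinear comparison into a linear elliptic inequality via $\log$-concavity, choosing $a$ large enough that the barrier strictly dominates $f$.
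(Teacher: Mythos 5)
Your proof is correct, but the lower bound is handled by a genuinely different mechanism than the paper's. For the upper bound both arguments come down to the same fact ($\Phi_{tt}>0$, hence convexity in $t$; the paper states this directly, you recover it via the Hessian sign at an interior maximum together with $1+\triangle\Phi>0$). For the lower bound the paper argues pointwise at the interior minimum of $\Phi-\Phi_{-a}$: there $D^2\Phi\geq D^2\Phi_{-a}$, and it invokes Donaldson's algebraic Lemma~\ref{L-2-1} after padding both Hessians into $(n+2)\times(n+2)$ matrices $A,B$ with an extra diagonal slot chosen so that $Q(A)=Q(B)$, deriving $Q(B-A)<0$ against $Q(B-A)\geq 0$ for the semi-positive matrix $B-A$. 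You instead use the $\log$-concavity of Lemma~\ref{L-2-2} globally: the tangent-plane inequality at $D^2\Phi$ converts the strict domination $Q(D^2\Phi_{-a})>\sup f$ into the linear elliptic inequality $dQ_\Phi(W)<0$ for $W=\Phi-\Phi_{-a}$ on all of $X\times[0,1]$, and the weak minimum principle (valid since the symbol of $dQ_\Phi$ is positive definite exactly when $f>0$) finishes. The two are close cousins --- the paper's matrix lemma is itself a concavity statement --- but yours avoids the matrix-padding trick entirely and produces a reusable differential inequality, at the cost of having to verify that both Hessians lie in the convex concavity domain and that $dQ_\Phi$ is elliptic; the paper's version is more self-contained at a single point and needs only the algebraic lemma. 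Both rely on the standing hypothesis $1+\triangle\Phi>0$ along the solution, which you make explicit and the paper uses implicitly in asserting $\Phi_{tt}>0$.
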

\begin{proof}First we have
\[\Phi_{tt}>0.
\]
It follows that
\[
\frac{\Phi(\cdot, t)-\Phi(\cdot, 0)}{t-0}< \frac{\Phi(\cdot,
1)-\Phi(\cdot, t)}{1-t}.
\]
Namely
\[
\Phi(t)<(1-t)\phi_0+t\phi_1.
\]
Note $\Phi=\Phi_{-a}$ on the boundary. If $\Phi<\Phi_{-a}$ for
some point, then $\Phi-\Phi_{-a}$ obtains its minimum in the
interior, say at $p$. Then $D^{2}\Phi\geq D^{2}\Phi_{-a}$ at $p$.
Note
\[
Q(D^2\Phi)=f, ~~\mbox{and}~~ Q(D^2\Phi_{-a})=2a((1-t)\triangle
\phi_0+t\triangle \phi_1)-|\nabla\phi_1-\nabla\phi_0|^2.
\]
If $a$ is sufficiently big, we know that
\begin{equation}\label{E-2-3} Q(D^2\Phi)<Q(D^2\Phi_{-a}).
\end{equation}
Let $A$ be a $(n+2)\times(n+2)$ symmetric matrix such that the
$(n+1)\times(n+1)$ block of $A$ is $D^2\Phi_{-a}$, and
$A_{i(n+2)}=A_{(n+2)i}=0$ for $1\leq i\leq n+1$,
$A_{(n+2)(n+2)}=1$. Let $B$ be a $(n+2)\times(n+2)$ symmetric
matrix such that the $(n+1)\times(n+1)$ block of $B$ is $D^2\Phi$
and $B_{i(n+2)}=B_{(n+2)i}=0$ for $1\leq i\leq n+1$,
$B_{(n+2)(n+2)}=\lambda$. $\lambda$ is a constant satisfying
\[
Q(B)=\Phi_{tt}(\lambda+\triangle
\Phi)-\Phi_{tk}^2=Q(A)=Q(D^2\Phi_{-a}).
\]
We know that $\lambda>1$ by (\ref{E-2-3}). It follows  that
$Q(B-A)<0$. But $B-A$ is semi-positive definite, $Q(B-A)\geq0.$
Contradiction.
\end{proof}
\subsection{$C^1$ estimates} To get a $C^{1}$
estimate independent of $\epsilon$, in particular when
$\epsilon\rightarrow 0$,  the fact that $\epsilon$ is a
constant is used heavily in \cite{Chen-He}. In general, the required estimates can be obtained depending on  $\inf f>0$. 
\begin{lemma}Suppose that $\Phi$ satisfies (\ref{E-2-1}), then
there is a uniform constant $C$ depending on $\inf f>0, |f|_{C^1}$
and the boundary data, such that
\[
|\nabla \Phi|\leq C, |\Phi_t|\leq C.
\]
\end{lemma}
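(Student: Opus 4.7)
My plan is to establish the two bounds separately. For $|\Phi_t|$, I would note that the equation $Q(D^2\Phi) = f > 0$ combined with $1+\triangle\Phi > 0$ forces $\Phi_{tt}(1+\triangle\Phi) = f + |\nabla\Phi_t|^2 > 0$, hence $\Phi_{tt} > 0$. Therefore $\Phi_t(x,\cdot)$ is monotone increasing in $t$. Comparing $t$-derivatives at the boundary of the barrier inequalities $\Phi_{-a} \leq \Phi \leq \Phi_0 := (1-t)\phi_0 + t\phi_1$ of Lemma \ref{L-2-3} (where $\Phi$ agrees with both barriers at $t = 0,1$), one gets
\[
-a + \phi_1 - \phi_0 \leq \Phi_t(x,0) \leq \Phi_t(x,t) \leq \Phi_t(x,1) \leq a + \phi_1 - \phi_0,
\]
giving the uniform bound $|\Phi_t| \leq a + |\phi_1 - \phi_0|_{C^0}$.

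For $|\nabla\Phi|$, I would apply the maximum principle to $h = |\nabla\Phi|^2$ through a test function of the form $G = \log h - A\Phi$ with $A > 0$ large. First I would record, by differentiating the equation $Q(D^2\Phi) = f$, the identities
\[
L\Phi = 2f - \Phi_{tt}, \qquad L\Phi_t = f_t, \qquad L\Phi_k = f_k + \Phi_{tt} R_{kj}\Phi_j,
\]
the last one invoking the Weitzenbock formula (\ref{E-2-2}). A completing-the-square argument applied to $L(\Phi_k^2)$ and using $Q(D^2\Phi) = f > 0$ then produces the lower bound
\[
L(h) \geq 2\langle\nabla\Phi, \nabla f\rangle + 2\Phi_{tt} R_{ij}\Phi_i\Phi_j + \frac{2f|\nabla\Phi_t|^2}{\Phi_{tt}},
\]
where the last term arises from the positive Schur complement $f/\Phi_{tt}$ of the symbol matrix of $L$. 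This same Schur-complement positivity shows that $L$ is elliptic in the usual sense, so $LG \leq 0$ at an interior maximum. At such a maximum, the critical-point relations $\nabla h = Ah\nabla\Phi$, $h_t = Ah\Phi_t$ let us compute $L(\log h)$ in closed form in terms of the symbol's quadratic form applied to $(\nabla\Phi,\Phi_t)$; combining this with $L\Phi = 2f - \Phi_{tt}$, the equation $\Phi_{tt}(1+\triangle\Phi) = f + |\nabla\Phi_t|^2$, and the already-established bound $|\Phi_t| \leq C$, we derive an algebraic inequality at the maximum that, once $A$ is chosen large enough relative to a lower Ricci bound and the boundary data, forces $h$ to be bounded uniformly.

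I expect the main obstacle to be that neither $\Phi_{tt}$ nor $\triangle\Phi$ is bounded a priori, and the Ricci contribution $2\Phi_{tt} R_{ij}\Phi_i\Phi_j$ in the lower bound for $L(h)$ comes with an unbounded factor of $\Phi_{tt}$ that cannot simply be dropped. The crucial role of $\inf f > 0$ is precisely to generate the coercive positive term $2f|\nabla\Phi_t|^2/\Phi_{tt}$; via the identity $|\nabla\Phi_t|^2/\Phi_{tt} = (1+\triangle\Phi) - f/\Phi_{tt}$, this term can be traded for something involving $\Phi_{tt}(1+\triangle\Phi)$ and used to absorb the bad Ricci term, closing the estimate. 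If instead only $f \geq 0$ were assumed, the Schur complement would degenerate and this absorption mechanism would fail, which is consistent with the hypothesis $\inf f > 0$ in the statement.
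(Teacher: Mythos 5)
Your bound on $|\Phi_t|$ is correct and is exactly the paper's argument: $\Phi_{tt}>0$ plus the barriers of Lemma \ref{L-2-3} control $\Phi_t$ on the boundary and hence everywhere. Your preparatory identities are also right: $dQ(\Phi)=2f-\Phi_{tt}$, $dQ(\Phi_k)=f_k+\Phi_{tt}R_{kj}\Phi_j$ via (\ref{E-2-2}), and the lower bound for $dQ(|\nabla\Phi|^2)$ coming from the Schur complement $f/\Phi_{tt}$ of the coefficient matrix of $dQ$.

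The gap is in the choice of test function, and it is a sign problem, not a technicality. Write $\sigma(v,w)=\Phi_{tt}|v|^2+(1+\triangle\Phi)w^2-2\Phi_{tk}v_kw\ge 0$ for the quadratic form of the symbol. At an interior maximum of $G=\log h-A\Phi$ the critical-point relations give
\[
dQ(\log h)=\frac{dQ(h)}{h}-\frac{\sigma(\nabla h,h_t)}{h^{2}}=\frac{dQ(h)}{h}-A^{2}\,\sigma(\nabla\Phi,\Phi_t),
\]
so the symbol form at $(\nabla\Phi,\Phi_t)$ enters with coefficient $-A^{2}<0$. Its piece $-A^{2}\Phi_{tt}|\nabla\Phi|^{2}=-A^{2}\Phi_{tt}h$ is an uncontrolled negative quantity (neither $\Phi_{tt}$ nor $h$ is yet bounded), the only positive terms available are $A\Phi_{tt}$ from $-A\,dQ(\Phi)$ and $2f|\nabla\Phi_t|^{2}/(h\Phi_{tt})$, and a large value of $h$ only makes $dQ(G)$ more negative --- consistent with $dQ(G)\le 0$, so no contradiction results and the argument does not close. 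The paper instead takes $h=\frac12(|\nabla\Phi|^{2}+b\Phi^{2})$: the convex auxiliary term $\frac b2\Phi^{2}$ has Hessian $b\,d\Phi\otimes d\Phi+b\Phi D^{2}\Phi$ and hence contributes $+\,b\,\sigma(\nabla\Phi,\Phi_t)$, with the good sign. It is this positive term --- $b(1+\triangle\Phi)\Phi_t^{2}$ combined with $\frac b2\Phi_{tt}|\nabla\Phi|^{2}$ by AM--GM and $\Phi_{tt}(1+\triangle\Phi)\ge f\ge\inf f$ --- that yields the coercive quantity $\sqrt{2bf}\,|\Phi_t||\nabla\Phi|$ beating $-|\nabla f|\,|\nabla\Phi|$, while the remaining half $\frac b2\Phi_{tt}|\nabla\Phi|^{2}$ absorbs the Ricci term $\Phi_{tt}R_{ij}\Phi_i\Phi_j$. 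Two further ingredients are missing from your sketch and are essential: (i) the normalization $\Phi\mapsto\Phi+At$ guaranteeing $|\Phi_t|\ge 1$ everywhere --- your upper bound on $|\Phi_t|$ is not what is needed; without a positive lower bound the coercive term degenerates wherever $\Phi_t=0$; and (ii) the Ricci term cannot be absorbed by the Schur-complement term $2f|\nabla\Phi_t|^{2}/\Phi_{tt}=2f(1+\triangle\Phi)-2f^{2}/\Phi_{tt}$ as you propose, since $f(1+\triangle\Phi)/h$ need not dominate $C_0\Phi_{tt}$: the two are linked only through $\Phi_{tt}(1+\triangle\Phi)\ge f$, which gives nothing as $\Phi_{tt}\to\infty$.
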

\begin{proof}Since $\Phi_{tt}>0$,  $\Phi_t$ obtains its
maximum on the boundary. By Lemma \ref{L-2-3}, it is easy to see
that $|\Phi_t|$ is bounded on the boundary. To bound $\nabla
\Phi$, take
\[
h=\frac{1}{2}\left(|\nabla\Phi|^2+b\Phi^2\right),
\]
where $b$ is a constant determined later. We want to show that $h$
is bounded. Namely, there exists a constant $C$ depending only on
$\inf f$, $|f|_{C^1}$ and the boundary data such that
\[ \max h\leq C.
\]
Since $h$ is uniformly bounded on the boundary,  we assume  $h$
takes its maximum at $(p, t_0)\in X\times (0, 1)$. Taking
derivative, we get that
\begin{eqnarray}\label{E-2-4}
h_t&=&\Phi_{tk}\Phi_k+b\Phi_t\Phi, ~~~
h_k=\Phi_{ik}\Phi_i+b\Phi_k\Phi,\nonumber\\
h_{tt}&=&\Phi_{ttk}\Phi_k+\Phi_{tk}^2+b(\Phi_{tt}\Phi+\Phi_t^2),\nonumber\\
h_{tk}&=&\Phi_{tik}\Phi_i+\Phi_{ti}\Phi_{ik}+b(\Phi_{tk}\Phi+\Phi_t\Phi_k),\nonumber\\
\triangle h&=&\Phi_{ikk}\Phi_i+\Phi^2_{ik}+b(\triangle
\Phi\Phi+\Phi_k^2),\nonumber\\
&=&\triangle \Phi_{, i}\Phi_i+\Phi^2_{ik}+b(\triangle
\Phi\Phi+\Phi_k^2)+R_{ij}\Phi_i\Phi_j,
\end{eqnarray}
where $R_{ij}$ is the Ricci curvature of $(X, g)$. It follows that
\begin{eqnarray}\label{E-2-5}
dQ(h)&=&\Phi_{tt}\triangle h+(1+\triangle
\Phi)h_{tt}-2\Phi_{tk}h_{tk}\nonumber\\
&=&\Phi_{tt}\left(\triangle \Phi_{,
i}\Phi_i+\Phi^2_{ik}+b(\triangle
\Phi\Phi+\Phi_k^2)\right)+\Phi_{tt}R_{ij}\Phi_i\Phi_j\nonumber\\
&&+(1+\triangle
\Phi)\left(\Phi_{ttk}\Phi_k+\Phi_{tk}^2+b(\Phi_{tt}\Phi+\Phi_t^2)\right)\nonumber\\
&&-2\Phi_{tk}\left(\Phi_{tik}\Phi_i+\Phi_{ti}\Phi_{ik}+b(\Phi_{tk}\Phi+\Phi_t\Phi_k)\right)\nonumber\\
&=&
\Phi_{tt}\Phi_{ik}^2+(1+\triangle\Phi)\Phi_{tk}^2-2\Phi_{ti}\Phi_{ik}\Phi_{tk}\nonumber\\
&&+b(\Phi_{tt}\Phi_k^2+(1+\triangle\Phi)\Phi_t^2-2\Phi_{tk}\Phi_t\Phi_k)\nonumber\\
&&+\Phi_{k}(\Phi_{tt}\triangle\Phi_{,k}+(1+\triangle
\Phi)\Phi_{ttk}-2\Phi_{ti}\Phi_{tik})\nonumber\\
&&+b\Phi(2\Phi_{tt}\triangle
\Phi+\Phi_{tt}-2\Phi_{tk}^2)+\Phi_{tt}R_{ij}\Phi_{i}\Phi_{j}.
\end{eqnarray}
Taking derivative of (\ref{E-2-1}), we can get that
\begin{eqnarray}\label{E-2-6}
\Phi_{ttk}(1+\triangle
\Phi)+\Phi_{tt}\triangle \Phi_{, k}-2\Phi_{itk}\Phi_{it}&=&f_k,\\
\label{E-2-7} \Phi_{ttt}(1+\triangle
\Phi)+\Phi_{tt}\triangle\Phi_{t}-2\Phi_{itt}\Phi_{it}&=&f_{t}.
\end{eqnarray}
By (\ref{E-2-5}) and (\ref{E-2-6}), we have
\begin{eqnarray}\label{E-2-8}
dQ(h)&=&\Phi_{tt}\Phi_{ik}^2+(1+\triangle\Phi)\Phi_{tk}^2-2\Phi_{ti}\Phi_{ik}\Phi_{tk}\nonumber\\
&&+b(\Phi_{tt}\Phi_k^2+(1+\triangle\Phi)\Phi_t^2-2\Phi_{tk}\Phi_t\Phi_k)\nonumber\\
&&+\Phi_{k}f_k-b\Phi_{tt}+\Phi_{tt}R_{ij}\Phi_i\Phi_j.\end{eqnarray}
Note at the point $(p, t_0)$, $h_t=h_k=0$, it follows that
\[
\Phi_{tk}\Phi_t=-b\Phi\Phi_t.
\]
We can get from (\ref{E-2-8}) that
\begin{eqnarray}\label{E-2-9}
dQ(h)&=&\Phi_{tt}\Phi_{ik}^2+(1+\triangle\Phi)\Phi_{tk}^2-2\Phi_{ti}\Phi_{ik}\Phi_{tk}\nonumber\\
&&+b(\Phi_{tt}\Phi_k^2+(1+\triangle\Phi)\Phi_t^2+2b\Phi\Phi_{t}^2)\nonumber\\
&&+\Phi_{k}f_k-b\Phi_{tt}+\Phi_{tt}R_{ij}\Phi_i\Phi_j\nonumber\\
&>&\Phi_{tt}\left(\frac{1}{2}b|\nabla\Phi|^2-b\Phi-C_0|\nabla\Phi|^2\right)\nonumber\\
&&+\frac{1}{2}b\Phi_{tt}|\nabla\Phi|^2+(1+\triangle\Phi)|\Phi_t|^2+\Phi_kf_k\nonumber\\
&>&\Phi_{tt}\left(\frac{1}{2}b|\nabla\Phi|^2-b\Phi-C_0|\nabla\Phi|^2\right)\nonumber\\
&&+(\sqrt{2bf}|\Phi_t|-|\nabla f|)|\nabla\Phi|,\end{eqnarray}
where $C_0=1+ \max|R_{ij}|$ is a constant. If $\Phi$ solves
(\ref{E-2-1}) with boundary condition
\[\Phi(\cdot, 0)=\phi_0, \Phi(\cdot, 1)=\phi_1,
\] then $\tilde \Phi=\Phi+At$ solves (\ref{E-2-1}) with  boundary
condition
\[
\tilde \Phi(x, 0)=\phi_0,~~\tilde\Phi(x, 1)=\phi_1+A,
\]
where $A$ is any constant. Since $|\Phi_{t}|$ and $|\Phi|$ are
bounded, we can choose normalization ($A$ big enough) such that
for any $(x, t)$,  $|\Phi_t|\geq 1.$ Choose $b$ such that \[b=\max
\left(\frac{|\nabla f|}{\sqrt {f}}, 4C_0\right).\] At the point
$(p, t_0)$, $dQ(h)\leq 0$, it follows from (\ref{E-2-9}) that
\[
|\nabla\Phi|^2(p)<\frac{b\Phi}{C_0}.
\]

\end{proof}
\subsection{$C^{2}$ estimates}
The $C^2$ estimates are only slight different with the case
$f=\epsilon$. First we have the following interior estimates.
\begin{lemma}
Suppose that $\Phi$ satisfies (\ref{E-2-1}), then there is a
uniform positive constants $C_1$ depending on $\inf f>0,
|f|_{C^1}, |f|_{C^2}$ and the boundary data, such that
\[
0<\Phi_{tt}+1+\triangle \Phi\leq C_1(1+\max_{\p (X\times [0,
1])}|\Phi_{tt}|).
\]
\end{lemma}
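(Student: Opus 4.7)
Let $H := \Phi_{tt}+1+\triangle\Phi$. By Lemma~\ref{L-2-1} and the equation $Q(D^2\Phi)=f>0$, both $\Phi_{tt}$ and $1+\triangle\Phi$ are strictly positive, so $H>0$ on $X\times[0,1]$. The plan is to apply the maximum principle to the auxiliary function
\[ G \;=\; \log H - \lambda\,\Phi, \]
with $\lambda>0$ a small constant to be chosen and $\Phi$ first normalized by an additive constant (using the $C^0$ bound of Lemma~\ref{L-2-3}) so that $\Phi\geq 1$. If $G$ achieves its maximum on $\partial(X\times[0,1])$, the comparison
\[ \log H(x,t) \leq \max_{\partial}\log H + 2\lambda|\Phi|_\infty, \]
combined with the fact that $1+\triangle\Phi$ on the boundary is determined by $\phi_0,\phi_1$, yields the stated estimate immediately; the substantive task is to control $H(p,t_0)$ when $G$ has an interior maximum at $(p,t_0)$.

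The linearized operator $dQ(u)=\Phi_{tt}\triangle u+(1+\triangle\Phi)u_{tt}-2\Phi_{tk}u_{tk}$ is uniformly elliptic on the solution, with positive-definite symbol $\sigma(\xi,\tau) = \Phi_{tt}|\xi|^2+(1+\triangle\Phi)\tau^2-2\Phi_{tk}\xi_k\tau$. Two computations feed into the argument. First, the concavity of $\log Q$ from Lemma~\ref{L-2-2} yields, on differentiating $\log Q(D^2\Phi)=\log f$ twice along any direction $\xi$ in $X\times[0,1]$,
\[ dQ(\Phi_{\xi\xi}) \;\geq\; f\,(\log f)_{\xi\xi}. \]
Applying this with $\xi=\partial_t$ and with $\xi$ ranging over a local spatial orthonormal frame, summing, and using the Weitzenb\"ock formula~\eqref{E-2-2} to commute covariant derivatives of $\triangle\Phi$, I obtain
\[ dQ(H) \;\geq\; f\,(\partial_t^2+\triangle)\log f \;-\; C_R\,H, \]
with $C_R$ depending on the Ricci curvature of $g$ and the $C^1$ norm of $\Phi$. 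Second, a direct computation using the equation gives the barrier identity $dQ(\Phi) = 2f - \Phi_{tt}$.

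At the interior maximum $(p,t_0)$, the critical point conditions $\nabla G=G_t=0$ give $\nabla H = \lambda H\nabla\Phi$ and $H_t = \lambda H\Phi_t$, so that $\sigma(\nabla H,H_t) = \lambda^2 H^2\,\sigma(\nabla\Phi,\Phi_t)$. Substituting into
\[ dQ(G) = \frac{dQ(H)}{H} - \frac{\sigma(\nabla H,H_t)}{H^2} - \lambda\,dQ(\Phi)\;\leq\;0 \]
and using the $C^1$-dependent estimate $\sigma(\nabla\Phi,\Phi_t)\leq C_1 H$, one reaches
\[ \lambda\,\Phi_{tt}(p,t_0) \;\leq\; 2\lambda f + \lambda^2 C_1 H(p,t_0) + C_R + \frac{|f\,(\partial_t^2+\triangle)\log f|}{H(p,t_0)}. \]
Choosing $\lambda$ small enough that $\lambda C_1<1/4$ gives $\Phi_{tt}(p,t_0)\leq H(p,t_0)/4 + C$, where $C$ depends only on $\inf f$, $|f|_{C^2}$ and the $C^1$ data. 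Combining this with the equation $\Phi_{tt}(1+\triangle\Phi)=f+|\nabla\Phi_t|^2$, the lower bound $\Phi_{tt}(1+\triangle\Phi)\geq \inf f>0$, and the AM--GM bound $\Phi_{tt}(1+\triangle\Phi)\leq H^2/4$ yields a uniform upper bound on $H(p,t_0)$, hence on $H$ everywhere through the comparison with the boundary value of $G$.

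The main obstacle is this final step: converting the bound on $\Phi_{tt}(p,t_0)$ extracted from $dQ(G)\leq 0$ into the desired bound on the full sum $H=\Phi_{tt}+(1+\triangle\Phi)$. This is the point at which $\inf f>0$ is used in an essential way, since without it one cannot rule out a scenario where $\Phi_{tt}(p,t_0)$ is very small while $1+\triangle\Phi(p,t_0)$ is large. Balancing these two possibilities, while simultaneously absorbing the gradient term $\lambda^2 C_1 H$ and the Ricci contribution $C_R H$, dictates the precise choice of $\lambda$ in terms of the $C^1$-constant $C_1$ and the curvature.
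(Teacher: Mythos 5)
There is a genuine gap, and in fact two. First, your differential inequality $dQ(H)\geq f(\p_t^2+\triangle)\log f - C_R H$ has the wrong power of $H$ in the curvature term. The commutator contribution is $2R_{ij}\Phi_{ti}\Phi_{tj}$ (this is exactly what appears in the paper's computation (\ref{E-2-14})), which involves the mixed second derivatives $\Phi_{ti}$; these are \emph{not} controlled by the $C^1$ norm of $\Phi$. The best available bound is $|\nabla\Phi_t|^2=\Phi_{tt}(1+\triangle\Phi)-f\leq H^2/4$, so the Ricci term is only bounded below by $-C_0H^2$. After dividing by $H$ in your computation of $dQ(\log H)$ this leaves a term $-C_0H$ with a \emph{fixed} constant $C_0$ determined by the curvature, which the good term $\lambda\Phi_{tt}\leq\lambda H$ cannot absorb for small $\lambda$ (and taking $\lambda$ large is blocked by the gradient term $\lambda^2 C_1H$). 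Second, even granting your inequality, the concluding step does not close, as you yourself half-concede: from $\Phi_{tt}(p,t_0)\leq H/4+C$ together with $\Phi_{tt}(1+\triangle\Phi)\geq\inf f$ and $\Phi_{tt}(1+\triangle\Phi)\leq H^2/4$ one cannot bound $H$. The scenario $\Phi_{tt}$ bounded, $1+\triangle\Phi$ arbitrarily large is consistent with all three statements, since the equation only forces $|\nabla\Phi_t|^2=\Phi_{tt}(1+\triangle\Phi)-f$ to be correspondingly large, which nothing in your argument forbids. A one-sided bound on $\Phi_{tt}$ alone is simply not enough information to control the sum $H$.

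The paper's proof avoids both problems by a different choice of test function: it takes $\tilde h=e^F h$ with $F=\tfrac12 bt^2-b\Phi$ rather than $\log h-\lambda\Phi$. The extra $\tfrac12 bt^2$ in the barrier is essential: it makes $dQ(F)=b(\Phi_{tt}+1+\triangle\Phi-2f)=b(H-2f)$ proportional to the \emph{full} quantity $H$ rather than to $\Phi_{tt}$ alone, which is precisely what lets one bound the sum rather than one summand. And the multiplicative (non-logarithmic) form means the good term enters as $h\,dQ(F)\sim bH^2$, quadratic in $H$ with an adjustable coefficient $b$, so it dominates the genuinely quadratic bad terms $-C_0H^2$ (Ricci) and $-C_2H^2$ (the gradient cross terms $P(h,F)$) once $b>C_0+C_2$. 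Your use of the concavity of $\log Q$ to obtain $dQ(\Phi_{\xi\xi})\geq f(\log f)_{\xi\xi}$ is a legitimate substitute for the paper's explicit verification that the quantities $L,M$ in (\ref{E-2-15}) are nonnegative, so that part of your argument is sound in spirit; but to repair the proof you need to replace $-\lambda\Phi$ by the paper's barrier $\tfrac12 bt^2-b\Phi$ and work with $e^Fh$ instead of $\log h$.
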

\begin{proof}It is clear that
\[\Phi_{tt}+1+\triangle \Phi>0.
\]
Take \[ F=\frac{1}{2}bt^2-b\Phi,~~h=\Phi_{tt}+1+\triangle \Phi,
~~\mbox{and}~~\tilde h=\exp{(F)}h,~~
\]
where $b$ is some constant determined later.  We want to show that
$\tilde h$ obtains its maximum on the boundary. If not , suppose
$h$ obtains its maximum at the point $(p, t_0)\in X\times (0, 1)$.
Taking derivative,
\[
\tilde h_t=\exp(F)(F_th+h_t), ~~~ \tilde h_k=\exp(F)(F_kh+h_k)
\] and
\[
\tilde h_{tt}=\exp(F)(h_{tt}+F_{tt}h+2F_th_t+hF_t^2), ~~ \tilde
h_{kk}=\exp(F)(h_{kk}+F_{kk}h+2F_kh_k+hF_k^2).
\]
Also we have
\[
\tilde h_{tk}=\exp(F)(h_{tk}+h_tF_k+hF_{tk}+hF_kF_t+h_kF_t).
\]
Note at the point $(p, t_0)$, $\tilde h_t=\tilde h_k=0$. It
follows that
\[
h_t+hF_t=0,~~~h_k+hF_k=0.
\]
We can calculate that at the point $(p, t_0)$
\begin{eqnarray}\label{E-2-10}dQ(\tilde h)&=&\Phi_{tt}\triangle \tilde h+(1+\triangle
\Phi)\tilde h_{tt}-2\Phi_{tk}\tilde h_{tk}\nonumber\\
&=&\Phi_{tt}\exp(F)(\triangle h+h\triangle F-hF_k^2)\nonumber\\
&&+(1+\triangle \Phi)\exp(F)(h_{tt}+hF_{tt}-hF_t^2)\nonumber\\
&&-2\Phi_{tk}\exp(F)(h_{tk}+hF_{tk}-hF_kF_t)\nonumber\\
&=&\exp(F)(dQ(h)+hdQ(F)-P(h, F)),
\end{eqnarray}
where
\[
P(h, F)=h(\Phi_{tt}F_k^2+(1+\triangle
\Phi)F_t^2-2\Phi_{tk}F_tF_k).
\]
Now we carry out $dQ(F), dQ(h)$. It is clear that
\[
dQ(F)=b(1+\triangle \Phi+\Phi_{tt}-2f).
\]
Taking derivative, we have
\begin{eqnarray*}
 h_t&=&\Phi_{ttt}+\triangle \Phi_t,~~~  h_{tt}=\Phi_{tttt}+\triangle \Phi_{tt}\nonumber\\
h_k&=&\Phi_{ttk}+\triangle \Phi,_{k},~~~ \triangle =\triangle
\Phi_{tt}+\triangle ^2\Phi,~~~ \tilde h_{tk}=\Phi_{tttk}+\triangle
\Phi_{, tk}.
\end{eqnarray*}
We calculate
\begin{eqnarray}\label{E-2-11}
dQ(h)&=&\Phi_{tt}\triangle  h+(1+\triangle \Phi)
h_{tt}-2\Phi_{tk} h_{tk}\nonumber\\
&=&(1+\triangle \Phi)(\Phi_{tttt}+\triangle
\Phi_{tt})+\Phi_{tt}(\triangle \Phi_{tt}+\triangle
^2\Phi)\nonumber\\
&&-2\Phi_{tk}(\Phi_{tttk}+\triangle \Phi_{, tk}).
\end{eqnarray}
Taking derivative of (\ref{E-2-6}) and (\ref{E-2-7}), we have
\begin{eqnarray}\label{E-2-12}
\Phi_{tt}\triangle \Phi_{tt}+(1+\triangle
\Phi)\Phi_{tttt}-2\Phi_{tk}\Phi_{tttk} +2\Phi_{ttt}\triangle
\Phi_t-2\Phi_{ttk}^2=f_{tt},&& \\\label{E-2-13}
\Phi_{tt}\triangle^2 \Phi+(1+\triangle \Phi)\triangle
\Phi_{tt}-2\Phi_{ti}\triangle \Phi_{ti}
+2\Phi_{ttk}\triangle\Phi_{,k}-2\Phi_{tik}^2=\triangle f.&&
\end{eqnarray}
It follows that
\begin{eqnarray}\label{E-2-14}
dQ(h)=2\Phi_{ttk}^2+2\Phi_{tik}^2 -2\Phi_{ttt}\triangle
\Phi_t-2\Phi_{ttk}\triangle
\Phi_{,k}+2R_{ij}\Phi_{ti}\Phi_{tj}+f_{tt}+\triangle f.\nonumber\\
\end{eqnarray}
Denote \begin{equation}\label{E-2-15}
L=\Phi_{ttk}^2-\Phi_{ttt}\triangle \Phi_t,
M=\Phi_{tij}^2-\Phi_{ttk}\triangle \Phi_{, k}.
\end{equation}
By (\ref{E-2-6}) and (\ref{E-2-7}), we get that
\[
\Phi_{tt}L=
\Phi_{tt}\Phi_{ttk}^2+(1+\triangle)\Phi_{ttt}^2-2\Phi_{tk}\Phi_{ttk}\Phi_{ttt},\]
and
\[
\Phi_{tt}M
=\Phi_{tt}\Phi_{tij}+(1+\triangle)\Phi_{ttk}^2-2\Phi_{tk}\Phi_{tik}\Phi_{tti}.\]
It follows that $L, M\geq 0.$ It is clear that
\[R_{ij}\Phi_{ti}\Phi_{tj}\geq -C_0(\Phi_{tt}+1+\triangle\Phi)^2,
\]
where $C_0=1+\max|R_{ij}|.$ It follows that
\[
dQ(h)\geq-C_0(\Phi_{tt}+1+\triangle \Phi)^2-|f|_{C^2}.
\]
It is also easy to get that
\[
P(h, f)\leq C_2(\Phi_{tt}+1+\triangle \Phi)^2,
\]
where $C_2$ is constant depending on $|\Phi|_{C^1}$. We can get
that
\[
dQ(\tilde h)>\exp(F)((b-C_0-C_2)(\Phi_{tt}+1+\triangle
\Phi)^2-2bf(\Phi_{tt}+1+\triangle \Phi)-|f|_{C^2}).
\]
Note at the point $(p, t_0)$, $dQ(\tilde h)\leq 0$. Take \[
b=C_0+C_2+1,
\]
we have at the point $(p, t_0)$
\[
\Phi_{tt}+1+\triangle \Phi\leq C_3(|f|_{C^2}, \inf f).
\]
Since $\exp(F)(\Phi_{tt}+1+\triangle \Phi)$ obtain its maximum at
$(p, t_0)$, it follows that
\[
\Phi_{tt}+1+\triangle \Phi\leq C_4.
\]
It means that either $\exp(F)(\Phi_{tt}+1+\triangle \Phi)$ obtains
its maximum on the boundary, or $\Phi_{tt}+1+\triangle \Phi$ is
uniformly bounded. In any case, we have
\[
0<\Phi_{tt}+1+\triangle \Phi\leq C_1(1+\max_{\p (X\times [0,
1])}|\Phi_{tt}|).
\]
\end{proof}
The boundary $C^2$ estimates follow exactly the same as in
\cite{Chen-He}. \begin{lemma} If $\Phi$ is a solution of
(\ref{E-2-1}), then $\Phi$ satisfies the following {\it a priori}
estimate
\[
|\triangle \Phi|\leq C~~, |\Phi_{tk}|\leq C,~~ |\Phi_{tt}|\leq C,
\]
where $C$ is a universal constant depending on $\inf f, |f|_{C^2}$
and the boundary data.
\end{lemma}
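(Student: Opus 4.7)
The plan is to reduce the lemma to a single boundary bound on $\Phi_{tt}$ and then invoke the interior estimate already established. First, observe that the equation $\Phi_{tt}(1+\triangle \Phi) = f + |\nabla \Phi_t|^2$ together with the concavity Lemma \ref{L-2-1} forces $\Phi_{tt}>0$ and $1+\triangle \Phi>0$ throughout $X\times [0,1]$. Consequently the previous lemma gives
\[
0 < \Phi_{tt},\; 1+\triangle \Phi \leq \Phi_{tt}+1+\triangle \Phi \leq C_1\left(1+\max_{\partial(X\times[0,1])}|\Phi_{tt}|\right),
\]
so a uniform boundary bound on $\Phi_{tt}$ automatically produces global bounds on both $\Phi_{tt}$ and $\triangle \Phi$. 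The mixed derivative $|\Phi_{tk}|$ is then controlled from the equation itself, since $|\nabla \Phi_t|^2 = \Phi_{tt}(1+\triangle \Phi) - f$ is bounded once the two factors are bounded. So the entire lemma reduces to producing a uniform bound on $\Phi_{tt}|_{\partial(X\times[0,1])}$.

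Next, I would treat the boundary $X\times\{0\}$ (the other component being symmetric). On this face the tangential spatial derivatives of $\Phi$ are simply derivatives of $\phi_0$ and are known, so $\triangle \Phi|_{t=0} = \triangle \phi_0$ is automatically controlled. For the normal direction one needs two-sided bounds on $\Phi_{tt}|_{t=0}$. The upper barrier is immediate from Lemma \ref{L-2-3}: the function $(1-t)\phi_0 + t\phi_1$ dominates $\Phi$ and agrees with it on the boundary, which pins $\Phi_t(\cdot,0)$ from below; combined with the $C^1$ estimate this gives a one-sided control. The lower barrier comes from $\Phi_{-a}$, which is a subsolution by the computation in Lemma \ref{L-2-3} and lies below $\Phi$ while matching it on the boundary; this bounds $\Phi_t(\cdot,0)$ from above.

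To pass from these one-sided bounds on $\Phi_t$ at $t=0$ to a two-sided bound on $\Phi_{tt}$ at $t=0$, I would use the standard boundary barrier argument for the Donaldson operator from \cite{Chen-He}: one constructs comparison functions of the form $\Phi \pm \eta(x,t)$ with $\eta$ vanishing on $X\times\{0,1\}$, and uses the linearized operator $dQ$ together with the concavity Lemma \ref{L-2-1} (which gives an inequality of the form $Q(A)-Q(B) \geq$ linearization at $B$) to ensure $\eta$ can be chosen so that the comparison principle applies. Evaluating $\partial_t$ of these comparison functions at $t=0$ then squeezes $\Phi_{tt}(\cdot,0)$ into a bounded interval. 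The key quantitative input is $\inf f >0$, which is exactly what prevents $1+\triangle \Phi$ or $\Phi_{tt}$ from degenerating to zero and permits the barrier construction to close.

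The main obstacle is the boundary bound on $\Phi_{tt}$ itself; everything else in the lemma is a formal consequence of the interior estimate and the equation. The delicate point in the barrier construction is that the ambient operator $Q$ is only degenerate elliptic and not uniformly so, so the barriers must be built using the concavity structure of $Q$ (Lemma \ref{L-2-1}) rather than classical uniformly elliptic comparison. Since $\inf f >0$ keeps us bounded away from the degenerate locus in exactly the same way as $\epsilon>0$ did in \cite{Chen-He}, the Chen--He boundary argument transplants verbatim, which is what the author invokes.
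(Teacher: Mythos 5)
Your reduction of the global bounds to a single boundary bound is sound as far as it goes: the interior estimate controls $\Phi_{tt}+1+\triangle \Phi$ by its boundary maximum, positivity of $\Phi_{tt}$ and of $1+\triangle \Phi$ then gives two-sided bounds on each term separately, and $|\nabla\Phi_t|^2=\Phi_{tt}(1+\triangle\Phi)-f$ converts these into a bound on the mixed derivatives. The paper itself gives no argument beyond the sentence that the boundary $C^2$ estimates ``follow exactly the same as in \cite{Chen-He}'', so in deferring the boundary step to that reference you are consistent with the author, and more explicit about the bookkeeping.

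The gap is in your mechanism for the boundary bound, and in the order of your reduction. A comparison argument with functions $\Phi\pm\eta$, $\eta$ vanishing on $X\times\{0,1\}$, followed by ``evaluating $\p_t$ at $t=0$'', controls the \emph{first} normal derivative $\Phi_t(\cdot,0)$ --- which is already known from the $C^1$ estimate --- not $\Phi_{tt}(\cdot,0)$; taking one $t$-derivative of an inequality that is an equality at $t=0$ cannot see the second derivative. To reach a second derivative at the boundary by a maximum principle one must run the barrier argument on a \emph{first derivative} of $\Phi$, and the only one available is tangential: the Chen--He argument bounds $\Phi_{tk}(\cdot,0)$ by applying $dQ$ to $\pm\p_k(\Phi-\Phi_{-a})$ plus a barrier, using the differentiated equation (\ref{E-2-6}), and only afterwards reads off $\Phi_{tt}(\cdot,0)=(f+|\nabla\Phi_t|^2)/(1+\triangle\phi_0)$ from the equation, the denominator being bounded below since $\phi_0\in\cH$. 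Your proposed order --- bound $\Phi_{tt}$ on the boundary first, then deduce $\Phi_{tk}$ from the equation --- is circular at the boundary: the upper bound on $\Phi_{tt}(\cdot,0)$ needs an upper bound on $|\nabla\Phi_t|(\cdot,0)$, which is exactly the mixed derivative you postponed, and a direct barrier for $\Phi_t$ near $t=0$ would require control of $\nabla_x\Phi_t(\cdot,0)=\Phi_{tk}(\cdot,0)$ even to write down the comparison function. The missing step is therefore the tangential-derivative barrier argument for $\Phi_{tk}$ on the boundary; once that is supplied, the rest of your outline closes.
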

The H\"older estimate of $D^2\Phi$ follows from Evans-Krylov theory using  the concavity
of $\log Q$.  Once we get the H\"older estimates of $D^2\Phi$, the standard
boot-strapping argument gives all higher order derivatives of
$\Phi$. 
\section{Solve the equation}
To solve the Donaldson equation for general $f$, we consider the
following continuity family for $s\in [0, 1]$
\begin{equation}\label{E-3-1}
Q(D^2\Phi)=(1-s)Q(D^2\Phi_{-a})+sf,
\end{equation}
with the boundary condition
\[
\Phi(\cdot, 0, s)=\phi_0, \Phi(\cdot, 1, s)=\phi_1,
\]
where $\Phi_{-a}=-at(1-t)+(1-t)\phi_0+t\phi_1$. When $a$ is big
enough, $Q(D^2\Phi_{-a})$ is positive and bounded away from $0$.
We shall now prove that if $f\in C^{k}(X\times [0, 1])$ with
$k\geq 2$ then we can find of solution of (\ref{E-2-1}) such that
$\Phi\in C^{k+1, \beta}(X\times [0, 1])$ for any $0\leq \beta<1$.
Consider the set
\[S=\left\{s\in[0, 1]:~\mbox{the equation (\ref{E-3-1}) has a solution in}~~C^{k-1, \beta}(X\times [0, 1])\right\}
\]
Obviously $0\in S$. Hence we need only show that $S$ is both open
and close. It is clear that $Q: C^{k+1, \beta}\rightarrow C^{k-1,
\beta}$ is open if \[ 1+\triangle\Phi>0~~ \mbox{and}~~
Q(D^2\Phi)>0.
\]
In this case $dQ$ is an invertible elliptic operator and openness
follows. The closeness of $S$ follows from the a prior estimates
derived in Section 2. Hence Theorem \ref{T-1-1} holds.
\bibliographystyle{plain}

\noindent Weiyong He\\
whe@math.ubc.ca\\
Department of Mathematics\\
University of British Columbia
\end{document}